\newtheorem{thr}{Theorem}
\newtheorem{claim}[thr]{Claim}
\theoremstyle{definition}
\newtheorem{problem}[thr]{Problem}
\theoremstyle{remark}
\begin{document}

\title{Nonnegative rank depends on the field II}

\author{Yaroslav Shitov}
\address{National Research University Higher School of Economics, 20 Myasnitskaya Ulitsa, Moscow, Russia 101000}
\email{yaroslav-shitov@yandex.ru}

\subjclass[2000]{15A23}
\keywords{Nonnegative matrix factorization}

\begin{abstract}
We provide an example of a $21\times 21$ matrix with nonnegative integer entries which can be written as a sum of $19$ nonnegative rank-one matrices but not as a sum of $19$ rational nonnegative rank-one matrices. This gives a solution for a problem posed by Cohen and Rothblum in 1993.
\end{abstract}

\maketitle

Let $A$ be a matrix with nonnegative entries in a field $\mathcal{F}\subset\mathbb{R}$. The nonnegative rank of $A$ with respect to $\mathcal{F}$ is the smallest $k$ such that $A$ is a sum of $k$ nonnegative rank-one matrices with entries in $\mathcal{F}$. We denote this quantity by $\operatorname{Rank}_+(A,\mathcal{F})$ or simply $\operatorname{Rank}_+ A$ if $\mathcal{F}=\mathbb{R}$. Cohen and Rothblum asked the following question in the  foundational paper~\cite{CR} published in 1993.

\begin{problem}\label{prCR}
Is there a rational matrix $A$ such that $\operatorname{Rank}_+(A,\mathbb{Q})\neq\operatorname{Rank}_+(A)$?
\end{problem}
 
Applications motivating this problem include the theory of computation. Vavasis~\cite{Vavas} demonstrates the connection between Problem~\ref{prCR} and algorithmic complexity of nonnegative rank. Another notable application of nonnegative ranks is the theory of extended formulations of polytopes~\cite{Yan}, and the possible lack of optimal rational factorizations is a difficulty in this theory~\cite{Roth}. Kubjas, Robeva, and Sturmfels~\cite{KRS} consider Problem~\ref{prCR} in context of modern statistics; they also give a partial solution of this problem. The work on relaxed versions includes the first part of this paper~\cite{mydep} with an example of a field $\mathcal{F}$ and a matrix $A$ over $\mathcal{F}$ such that $\operatorname{Rank}_+(A,\mathcal{F})\neq\operatorname{Rank}_+(A)$. A similar problem for \textit{positive semidefinite} rank has been solved in~\cite{GFR}.

\bigskip

The aim of this note is to give a positive solution of Problem~\ref{prCR} by proving the result mentioned in the abstract. Assuming that all variables are nonnegative, we denote by $\mathcal{B}(\alpha_1,\ldots,\alpha_n)$ and $\mathcal{C}(a_1,a_2,b,c,d)$ the matrices
$$
\begin{pmatrix}
\alpha_1&\ldots&\alpha_n&1&1&1&1\\
1&\ldots&1&1&1&0&0\\
0&\ldots&0&0&1&1&0\\
0&\ldots&0&0&0&1&1\\
0&\ldots&0&1&0&0&1
\end{pmatrix},\,\,\,\,
\begin{pmatrix}
d&2&2&1&0\\
1&2&1&0&1\\
0&0&1&b&0\\
0&1&0&0&c\\
0&1&1&a_1&a_2
\end{pmatrix},
$$
and by $V$ the bottom-right $4\times4$ submatrix of $\mathcal{B}$. The lower bounds in Claims~2--4 can be easily checked for the conventional rank function, and it is sufficient for the proof as the inequality $\operatorname{Rank}\leqslant\operatorname{Rank}_+$ shows. Claim~5 is a basic example, see~\cite{CR}.

\begin{claim}\label{c3}
$\operatorname{Rank}_+ \mathcal{C}\geqslant3$.
\end{claim}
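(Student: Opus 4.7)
The plan is to use the observation made in the paragraph just above Claim~\ref{c3}: since $\operatorname{Rank}_+ \mathcal{C} \geq \operatorname{Rank} \mathcal{C}$ always holds, it suffices to prove the corresponding lower bound for the ordinary rank. So the task reduces to exhibiting a $3\times 3$ submatrix of $\mathcal{C}(a_1,a_2,b,c,d)$ whose determinant is a nonzero constant, independent of the values of the five nonnegative parameters.

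The natural strategy is to look for a $3\times 3$ submatrix that avoids every entry containing a parameter. The parameter $d$ sits in the top row, $a_1,a_2$ sit in the bottom row, $b$ sits in column $4$, and $c$ sits in column $5$. Therefore I would isolate rows $2,3,4$ and columns $1,2,3$, which gives the parameter-free block
$$
\begin{pmatrix}
1 & 2 & 1\\
0 & 0 & 1\\
0 & 1 & 0
\end{pmatrix}.
$$
A direct expansion along the first column shows that its determinant equals $-1$.

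Since this submatrix has a nonzero determinant for every choice of the parameters, we conclude $\operatorname{Rank} \mathcal{C}(a_1,a_2,b,c,d)\geqslant 3$, and the claim follows from the inequality $\operatorname{Rank}_+\geqslant\operatorname{Rank}$. There is no real obstacle here: the only non-automatic step is spotting the right submatrix, and the structural placement of the parameters (one per designated row or column) essentially forces the choice above.
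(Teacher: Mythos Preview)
Your proof is correct and follows exactly the route the paper indicates: reduce to the ordinary rank via $\operatorname{Rank}\leqslant\operatorname{Rank}_+$ and exhibit a parameter-free $3\times3$ submatrix with nonzero determinant. The paper does not spell out the details, but your choice of rows $2,3,4$ and columns $1,2,3$ and the determinant computation are correct.
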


\begin{claim}\label{c4}
Let $\mathcal{C}_1$, $\mathcal{C}_2$ be the matrices obtained from $\mathcal{C}$ by adjoining a non-zero column of the form $(0\,0\,0\,0\,x)^\top$ and by adjoining a non-zero row of the form $(0\,0\,0\,y\,z)$, respectively. Then $\operatorname{Rank}_+ (\mathcal{C}_1)\geqslant 4$, $\operatorname{Rank}_+ (\mathcal{C}_2)\geqslant 4$.
\end{claim}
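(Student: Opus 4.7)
The plan is to invoke the inequality $\operatorname{Rank}\leqslant\operatorname{Rank}_+$, as indicated in the paragraph preceding Claim~\ref{c3}, and to exhibit in each of $\mathcal{C}_1$ and $\mathcal{C}_2$ a $4\times 4$ submatrix whose determinant is a nonzero constant, independent of the free parameters $a_1,a_2,b,c,d$. The key structural observation is that the adjoined column of $\mathcal{C}_1$ has only one nonzero entry (namely $x$), while the adjoined row of $\mathcal{C}_2$ has at most two (namely $y$ and $z$, at least one of which is positive). Hence a well-chosen $4\times 4$ submatrix that contains the adjoined column (resp.\ row), expanded by Laplace along it, reduces the calculation to one of $\pm x$, $\pm y$, $\pm z$ times a $3\times 3$ minor of the fixed part of $\mathcal{C}$; the task is therefore to locate $3\times 3$ minors of $\mathcal{C}$ that evaluate to nonzero constants and sit in the right position to absorb the adjoined data.

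I would first record two such anchor minors in $\mathcal{C}$, both avoiding every parameter entry $d,b,c,a_1,a_2$ and both equal to $-1$: the block on rows $\{1,2,4\}$ and columns $\{2,3,4\}$, and the block on rows $\{2,3,4\}$ and columns $\{1,2,3\}$. Both are verified by direct expansion.

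For $\mathcal{C}_1$ I would extend the first anchor by row $5$ of $\mathcal{C}$ and the adjoined column, producing the $4\times 4$ submatrix on rows $\{1,2,4,5\}$ and columns $\{2,3,4,6\}$; Laplace expansion along the last column yields determinant $-x$, and the parameter $a_1$ drops out because its column is struck in the cofactor of $x$. For $\mathcal{C}_2$ I would split on which of $y,z$ is positive. If $z\neq 0$, the first anchor works again: extend it by column $5$ and the adjoined row to form the submatrix on rows $\{1,2,4,6\}$ and columns $\{2,3,4,5\}$, whose determinant equals $-z$ (with $c$ dropping out for the same structural reason). If $y\neq 0$, use the second anchor instead, extending by column $4$ and the adjoined row, i.e.\ the submatrix on rows $\{2,3,4,6\}$ and columns $\{1,2,3,4\}$, with determinant $-y$ (and $b$ dropping out analogously).

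The main obstacle is the initial combinatorial search: a priori it is not obvious that $\mathcal{C}$ admits $3\times 3$ submatrices that are simultaneously constant, nonsingular, and positioned so that the adjoined row or column fits in without introducing an unwanted parameter. Once the two anchor blocks are identified, the three required $4\times 4$ determinants follow by a single Laplace expansion each.
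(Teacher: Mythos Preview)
Your approach is exactly what the paper intends: it states just before Claim~\ref{c3} that the lower bounds in Claims~\ref{c3}--\ref{c2} follow from the ordinary rank, so exhibiting a nonvanishing $4\times 4$ minor is precisely the expected argument. Your treatment of $\mathcal{C}_1$ and of the case $y\neq 0$ for $\mathcal{C}_2$ is correct.

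There is, however, a slip in your $z\neq 0$ case for $\mathcal{C}_2$. The submatrix on rows $\{1,2,4,6\}$ and columns $\{2,3,4,5\}$ is
\[
\begin{pmatrix} 2 & 2 & 1 & 0 \\ 2 & 1 & 0 & 1 \\ 1 & 0 & 0 & c \\ 0 & 0 & y & z \end{pmatrix},
\]
and its last row has \emph{two} potentially nonzero entries, since column~$4$ (which carries $y$) is among your selected columns. Laplace expansion along that row therefore has two terms, and the determinant is $2y(c-1)-z$, not $-z$; in particular $c$ does \emph{not} drop out, and for $c=1+z/(2y)$ the minor vanishes. The ``structural reason'' you invoke fails here precisely because the adjoined row meets the chosen column set in two places rather than one.

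The repair is immediate and already implicit in your case split: your second minor (rows $\{2,3,4,6\}$, columns $\{1,2,3,4\}$, determinant $-y$) handles every instance with $y\neq 0$, so the first minor is only needed when $y=0$, and then its determinant really is $-z\neq 0$. Alternatively, for a clean $z\neq 0$ minor independent of $y$, take rows $\{2,3,4,6\}$ and columns $\{1,2,3,5\}$; the adjoined row then reads $(0,0,0,z)$ and the determinant is $-z$.
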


\begin{claim}\label{c2}
If $\alpha_1,\ldots,\alpha_n$ are not all equal, then $\operatorname{Rank}_+ \mathcal{B}\geqslant5$.
\end{claim}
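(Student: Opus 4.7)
The plan is to argue by contradiction: suppose $\mathcal{B}=\sum_{j=1}^{4}u_j v_j^{\top}$ with nonnegative $u_j\in\mathbb{R}^5$ and $v_j\in\mathbb{R}^{n+4}$. The first step is to examine the zero pattern of the leftmost $n$ columns: each such column vanishes in rows $3,4,5$, which forces $(u_j)_i(v_j)_k=0$ for every $j$, every $k\leqslant n$, and every $i\in\{3,4,5\}$. Consequently each factor index $j$ falls into at least one of two types: either the support of $u_j$ lies in $\{1,2\}$, or the support of $v_j$ avoids the first $n$ columns. Write $T$ for the set of indices of the first type.

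Next I will exploit the hypothesis that the $\alpha_i$ are not all equal. Factors outside $T$ contribute nothing to the first $n$ columns of $\mathcal{B}$, so the top-left $2\times n$ block
$$\begin{pmatrix}\alpha_1&\cdots&\alpha_n\\ 1&\cdots&1\end{pmatrix}$$
is a sum of $|T|$ nonnegative rank-one matrices; since it has conventional rank $2$, this yields $|T|\geqslant 2$.

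In the last step I restrict $\mathcal{B}$ to rows $3,4,5$. Factors with $j\in T$ contribute nothing there, so these three rows decompose as a sum of at most $4-|T|\leqslant 2$ nonnegative rank-one matrices. But the restriction of these three rows to the last four columns is
$$\begin{pmatrix}0&1&1&0\\ 0&0&1&1\\ 1&0&0&1\end{pmatrix},$$
which has conventional rank $3$; combined with $\operatorname{Rank}\leqslant\operatorname{Rank}_+$, this produces the contradiction $3\leqslant 2$.

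The main obstacle is spotting the dichotomy in the first step. Once the partition $\{T,T^{c}\}$ is in hand, the argument reduces to two ordinary-rank bounds pulled from opposite corners of $\mathcal{B}$, and no nonnegative-rank information about $V$ itself is needed.
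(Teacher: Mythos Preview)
Your argument is correct: the dichotomy in Step~1 is valid because nonnegative summands of a zero entry must each vanish, the $2\times n$ block has ordinary rank~$2$ precisely when the $\alpha_i$ are not all equal, and the $3\times 4$ block you extract from rows $3,4,5$ indeed has ordinary rank~$3$. The contradiction $3\leqslant 4-|T|\leqslant 2$ follows.

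However, the paper's proof is much shorter and does not use nonnegativity at all. It simply observes that the \emph{conventional} rank of $\mathcal{B}$ equals~$5$ whenever the $\alpha_i$ are not all equal, and then invokes $\operatorname{Rank}\leqslant\operatorname{Rank}_+$. To see this directly: rows $3,4,5$ span a $3$-dimensional space (the last four columns already witness this), row~$2$ is independent of them because its first $n$ entries are nonzero while theirs vanish, and row~$1$ is independent of rows $2$--$5$ because any linear combination of those rows is constant across the first $n$ columns, whereas $(\alpha_1,\ldots,\alpha_n)$ is not. Your zero-pattern partition recovers the same two rank facts but routes them through a hypothetical nonnegative factorization, which is unnecessary here. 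The advantage of your approach is that the same template (split the factors by support, bound two disjoint blocks separately) generalizes to situations where the ordinary rank is genuinely smaller than the nonnegative rank; the advantage of the paper's approach is that for this particular claim a one-line linear-algebra check suffices.
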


\begin{claim}\label{c1}
$\operatorname{Rank}_+ V=4$.
\end{claim}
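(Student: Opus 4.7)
The plan is to prove both inequalities in the equation $\operatorname{Rank}_+ V = 4$. The upper bound is immediate: $V$ is the sum of the four rank-one nonnegative matrices $e_i r_i^{\top}$, where $r_i$ denotes the $i$th row of $V$.

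For the lower bound I would use the classical rectangle-covering (``fooling set'') argument. First observe that $V_{11}=V_{22}=V_{33}=V_{44}=1$, and that for every pair of distinct indices $i,j\in\{1,2,3,4\}$ at least one of $V_{ij}, V_{ji}$ equals $0$; this is a direct inspection of six cases. Now suppose
\[
V = \sum_{k=1}^{r} u_k v_k^{\top}
\]
is any nonnegative rank-one decomposition, and set $S_k=\operatorname{supp}(u_k)$, $T_k=\operatorname{supp}(v_k)$. If a single summand had both diagonal positions $(i,i)$ and $(j,j)$, for distinct $i,j$, in its support rectangle $S_k\times T_k$, then $i,j\in S_k\cap T_k$, and hence that summand would be strictly positive at $(i,j)$ and at $(j,i)$. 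Since all other summands are nonnegative, this would force $V_{ij},V_{ji}>0$, contradicting the observation. Thus each summand can cover at most one of the four diagonal positions, and so $r\geqslant 4$.

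I expect no significant obstacle, as $V$ is precisely the $4\times 4$ Cohen--Rothblum example from~\cite{CR}, and the fooling-set argument above is the standard proof of that example. The only verification needed is the routine inspection of the zero pattern in the six off-diagonal pairs of $V$; since the same matrix $V$ simultaneously satisfies $\operatorname{Rank} V = 3$, this example also illustrates that the inequality $\operatorname{Rank}\leqslant\operatorname{Rank}_+$ used for Claims~\ref{c3}--\ref{c2} is strict in general.
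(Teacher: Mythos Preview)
Your argument is correct. The paper itself does not supply a proof of this claim; it simply states that $V$ is a basic example and refers to~\cite{CR}. Your fooling-set argument on the diagonal positions of $V$ is exactly the standard proof of this example from~\cite{CR}, so you have filled in precisely what the paper omits by citation.
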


\begin{claim}\label{c2a}
If $\alpha_1=\ldots=\alpha_n\in[0,1]$, then $\operatorname{Rank}_+ \mathcal{B}=4$.
\end{claim}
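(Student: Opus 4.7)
The plan is to prove the two bounds separately. The lower bound $\operatorname{Rank}_+\mathcal{B}\geq 4$ is immediate from Claim~\ref{c1}: since $V$ is a submatrix of $\mathcal{B}$, we have $\operatorname{Rank}_+\mathcal{B}\geq\operatorname{Rank}_+V=4$.

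For the upper bound I would exhibit $\mathcal{B}$ explicitly as a sum of four nonnegative rank-one matrices. The natural starting point is the row-by-row factorization $V=\sum_{k=1}^{4} e_k r_k^\top$, where $e_k\in\mathbb{R}^4$ is the $k$-th standard basis vector and $r_k\in\mathbb{R}^4$ is the $k$-th row of $V$ viewed as a column. I would lift each of these rank-one terms to a rank-one on the full $5\times(n+4)$ matrix by prepending a scalar $s_k$ to the column factor (to account for the new top row of $\mathcal{B}$) and $n$ entries $t_{k,1},\ldots,t_{k,n}$ to the row factor (to account for the first $n$ columns of $\mathcal{B}$).

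The values $s_k$ and $t_{k,j}$ are then forced. The lifted column factor of the $k$-th term has support only in rows $1$ and $k+1$ of $\mathcal{B}$, so the vanishing of rows $3,4,5$ on the first $n$ columns forces $t_{k,j}=0$ for $k\geq 2$; the row-$2$ equation then gives $t_{1,j}=1$. Matching the top row of $\mathcal{B}$ on the last four columns yields four linear equations whose unique solution is $s_1=s_3=\alpha$ and $s_2=s_4=1-\alpha$, and these values automatically reproduce the required $\alpha$ in the top-left $1\times n$ block.

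There is no subtle obstacle here: once the row-wise factorization of $V$ is chosen, the constraints propagate transparently, and the hypothesis $\alpha\in[0,1]$ is exactly what is needed to keep all four rank-one summands nonnegative.
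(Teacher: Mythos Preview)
Your argument is correct and is essentially the paper's one-line proof unpacked: your four lifted rank-one terms have row factors equal to rows $2$--$5$ of $\mathcal{B}$ and column factors $(\alpha,1,0,0,0)^\top$, $(1-\alpha,0,1,0,0)^\top$, $(\alpha,0,0,1,0)^\top$, $(1-\alpha,0,0,0,1)^\top$, which is exactly the statement that row~$1=\alpha\cdot(\text{row }2)+(1-\alpha)\cdot(\text{row }3)+\alpha\cdot(\text{row }4)+(1-\alpha)\cdot(\text{row }5)$. One small slip in the narrative: the four equations coming from the last four columns alone are $s_1+s_4=s_1+s_2=s_2+s_3=s_3+s_4=1$, which have a one-parameter family of solutions, and it is the top-left block that pins down $s_1=\alpha$ (not the other way around).
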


\begin{proof}
The first row is a sum of other rows taken with nonnegative coefficients.
\end{proof}

\begin{claim}\label{c3a}
Let $a_1=a_2$. Then $\operatorname{Rank}_+ \mathcal{C}\leqslant3$ iff $a_1=b=c=1+\sqrt{0.5}$, $d=\sqrt{2}$.
\end{claim}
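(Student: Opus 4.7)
I would prove the two implications separately.  For the ``if'' direction it suffices to exhibit an explicit nonnegative rank-three decomposition of $\mathcal{C}$ whose three summands have the row-column supports $\{1,2\}\times\{1,2,3\}$, $\{1,3,5\}\times\{3,4\}$, and $\{2,4,5\}\times\{2,5\}$; the entries of the three rank-one matrices are then pinned down (up to the obvious scaling freedom) by the values $a_1=a_2=b=c=1+\sqrt{0.5}$, $d=\sqrt{2}$, and the verification is a direct computation.

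The main content is the ``only if'' direction.  Starting from any factorization $\mathcal{C}=UV$ with $U\in\mathbb{R}^{5\times3}_{\geqslant0}$ and $V\in\mathbb{R}^{3\times5}_{\geqslant0}$, I would classify the three rows $v^{1},v^{2},v^{3}$ of $V$ by their supports.  Rows $3$ and $4$ of $\mathcal{C}$ have the disjoint supports $\{3,4\}$ and $\{2,5\}$, so at least one $v^{j}$ lies in $\operatorname{span}(e_3,e_4)$ and at least one in $\operatorname{span}(e_2,e_5)$; since the nonzero entry $\mathcal{C}_{2,1}=1$ forces some $v^{j}$ to have positive first coordinate, up to relabeling the three rows must be one $v^{A}$ with $\operatorname{supp}(v^{A})\subseteq\{3,4\}$, one $v^{B}$ with $\operatorname{supp}(v^{B})\subseteq\{2,5\}$, and one $v^{C}$ with $v^{C}_{1}>0$.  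Expanding row~$3$ of $\mathcal{C}$ in this basis shows that only the $v^{A}$-term can contribute (the others would introduce coordinates outside $\{3,4\}$), so $v^{A}$ is a positive multiple of $(0,0,1,b,0)$, and symmetrically $v^{B}$ is a positive multiple of $(0,1,0,0,c)$.  Matching row~$5=(0,1,1,a,a)$, whose $v^{C}$-coefficient must vanish because $v^{C}_{1}>0$ and $\mathcal{C}_{5,1}=0$, then forces $b=c=a$.

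Next, the zeros $\mathcal{C}_{1,5}=\mathcal{C}_{2,4}=0$ together with $v^{B}_{5},v^{A}_{4}>0$ kill the $v^{B}$-contribution to row~$1$ and the $v^{A}$-contribution to row~$2$, and similarly force $v^{C}_{4}=v^{C}_{5}=0$.  Writing out all five coordinates of each of rows~$1$ and~$2$ yields, after cancellation, the pair of equations $a(2-d)=1$ and $a(2-2/d)=1$; subtracting gives $d=2/d$, hence $d=\sqrt{2}$ and $a=1/(2-\sqrt{2})=1+\sqrt{0.5}$.  The boundary configurations where some of $a,b,c,d$ vanish are eliminated inside the same chain of deductions --- for example, $d=0$ forces the third column of $U$ to be supported only at the second coordinate, which prevents representing $\mathcal{C}_{1,4}=1$.

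The main obstacle is the initial structural step: everything downstream is linear algebra plus the single quadratic $d^{2}=2$, so the rigidity of the factorization hinges on the support classification of the three rows of $V$.  This classification rests on the disjoint supports of rows~$3$ and~$4$ together with the single nonzero entry $\mathcal{C}_{2,1}$, and that is the part of the argument I would make sure to write out completely.
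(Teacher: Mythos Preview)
Your argument is correct.  The ``if'' direction is the same as the paper's: both exhibit the factorization whose three rank-one summands have row factors proportional to $(0,1,0,0,\alpha)$, $(0,0,1,\alpha,0)$, $(\sqrt{2},2,\sqrt{2},0,0)$.

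For the ``only if'' direction you take a genuinely different route.  The paper simply observes that $\operatorname{Rank}_+\mathcal{C}\leqslant3$ implies $\operatorname{Rank}\mathcal{C}\leqslant3$ and asserts that the vanishing of the $4\times4$ minors (together with $a_1=a_2$ and the standing assumption that the parameters are nonnegative) already forces $a=b=c=1+\sqrt{0.5}$, $d=\sqrt{2}$; no nonnegativity of the \emph{factors} is used.  Your argument instead exploits the nonnegativity of the factorization $UV$ to pin down the supports of the rows of $V$, and then solves the resulting linear system to obtain the same pair $a(2-d)=1$, $a(2-2/d)=1$.  The paper's approach is shorter and, as a bonus, shows that the conclusion follows from the weaker hypothesis $\operatorname{Rank}\mathcal{C}\leqslant3$; your approach is more explicit, displays the rigidity of the nonnegative factorization directly, and makes the later use of Claim~\ref{c3a} in Claim~\ref{c6} (where one needs that the \emph{rational} factorization is impossible) feel more self-contained.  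Your handling of the degenerate cases $a=0$ and $d=0$ is a bit terse but fills in correctly: for instance $a=0$ forces $b=c=0$ via row~5, whence $v^{A}_4=v^{B}_4=0$, and then $\mathcal{C}_{1,4}=1$ needs $v^{C}_4>0$ while $\mathcal{C}_{2,4}=0$ with $u_{2C}>0$ forces $v^{C}_4=0$.
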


\begin{proof}
Let $\alpha=1+\sqrt{0.5}$. It is easy to check that the equalities $a_1=b=c=\alpha$, $d=\sqrt{2}$ follow from a weaker statement $\operatorname{Rank}\mathcal{C}\leqslant3$. To prove the opposite direction, we check that the rows of $\mathcal{C}(\alpha,\alpha,\alpha,\alpha,\sqrt{2})$ are sums of the rows $(0,1,0,0,\alpha)$, $(0,0,1,\alpha,0)$, $(\sqrt{2},2,\sqrt{2},0,0)$ multiplied by nonnegative coefficients.
\end{proof}

\bigskip

Now we proceed with the example. We claim that the matrix
$$
\begin{pmatrix}
\textcolor{green}{2}&\textcolor{green}{2}&\textcolor{green}{2}&\textcolor{green}{1}&\textcolor{green}{0}&0&0&0&0&0&0&0&0&0&0&0&0&1&1&1&1\\
\textcolor{green}{1}&\textcolor{green}{2}&\textcolor{green}{1}&\textcolor{green}{0}&\textcolor{green}{1}&0&0&0&0&0&0&0&0&0&0&0&0&0&0&0&0\\
\textcolor{green}{0}&\textcolor{green}{0}&\textcolor{green}{1}&\textcolor{green}{2}&\textcolor{green}{0}&0&0&0&0&0&0&0&0&1&1&1&1&0&0&0&0\\
\textcolor{green}{0}&\textcolor{green}{1}&\textcolor{green}{0}&\textcolor{green}{0}&\textcolor{green}{2}&0&0&0&0&1&1&1&1&0&0&0&0&0&0&0&0\\
\textcolor{green}{0}&\textcolor{green}{1}&\textcolor{green}{1}&\textcolor{green}{2}&\textcolor{green}{2}&\textcolor{cyan}{1}&\textcolor{cyan}{1}&\textcolor{cyan}{1}&\textcolor{cyan}{1}&0&0&0&0&0&0&0&0&0&0&0&0\\
0&0&0&\textcolor{cyan}{1}&\textcolor{cyan}{1}&\textcolor{red}{1}&\textcolor{red}{1}&\textcolor{red}{0}&\textcolor{red}{0}&0&0&0&0&0&0&0&0&0&0&0&0\\
0&0&0&0&0&\textcolor{red}{0}&\textcolor{red}{1}&\textcolor{red}{1}&\textcolor{red}{0}&0&0&0&0&0&0&0&0&0&0&0&0\\
0&0&0&0&0&\textcolor{red}{0}&\textcolor{red}{0}&\textcolor{red}{1}&\textcolor{red}{1}&0&0&0&0&0&0&0&0&0&0&0&0\\
0&0&0&0&0&\textcolor{red}{1}&\textcolor{red}{0}&\textcolor{red}{0}&\textcolor{red}{1}&0&0&0&0&0&0&0&0&0&0&0&0\\
0&0&0&0&1&0&0&0&0&\textcolor{blue}{1}&\textcolor{blue}{1}&\textcolor{blue}{0}&\textcolor{blue}{0}&0&0&0&0&0&0&0&0\\
0&0&0&0&0&0&0&0&0&\textcolor{blue}{0}&\textcolor{blue}{1}&\textcolor{blue}{1}&\textcolor{blue}{0}&0&0&0&0&0&0&0&0\\
0&0&0&0&0&0&0&0&0&\textcolor{blue}{0}&\textcolor{blue}{0}&\textcolor{blue}{1}&\textcolor{blue}{1}&0&0&0&0&0&0&0&0\\
0&0&0&0&0&0&0&0&0&\textcolor{blue}{1}&\textcolor{blue}{0}&\textcolor{blue}{0}&\textcolor{blue}{1}&0&0&0&0&0&0&0&0\\
0&0&0&1&0&0&0&0&0&0&0&0&0&\textcolor{yellow}{1}&\textcolor{yellow}{1}&\textcolor{yellow}{0}&\textcolor{yellow}{0}&0&0&0&0\\
0&0&0&0&0&0&0&0&0&0&0&0&0&\textcolor{yellow}{0}&\textcolor{yellow}{1}&\textcolor{yellow}{1}&\textcolor{yellow}{0}&0&0&0&0\\
0&0&0&0&0&0&0&0&0&0&0&0&0&\textcolor{yellow}{0}&\textcolor{yellow}{0}&\textcolor{yellow}{1}&\textcolor{yellow}{1}&0&0&0&0\\
0&0&0&0&0&0&0&0&0&0&0&0&0&\textcolor{yellow}{1}&\textcolor{yellow}{0}&\textcolor{yellow}{0}&\textcolor{yellow}{1}&0&0&0&0\\
1&0&0&0&0&0&0&0&0&0&0&0&0&0&0&0&0&\textcolor{magenta}{1}&\textcolor{magenta}{1}&\textcolor{magenta}{0}&\textcolor{magenta}{0}\\
0&0&0&0&0&0&0&0&0&0&0&0&0&0&0&0&0&\textcolor{magenta}{0}&\textcolor{magenta}{1}&\textcolor{magenta}{1}&\textcolor{magenta}{0}\\
0&0&0&0&0&0&0&0&0&0&0&0&0&0&0&0&0&\textcolor{magenta}{0}&\textcolor{magenta}{0}&\textcolor{magenta}{1}&\textcolor{magenta}{1}\\
0&0&0&0&0&0&0&0&0&0&0&0&0&0&0&0&0&\textcolor{magenta}{1}&\textcolor{magenta}{0}&\textcolor{magenta}{0}&\textcolor{magenta}{1}
\end{pmatrix}
$$
possesses the property mentioned in the abstract. We denote this matrix by $\mathcal{A}$. 

\begin{claim}\label{c5}
$\operatorname{Rank}_+ \mathcal{A}\leqslant 19$. 
\end{claim}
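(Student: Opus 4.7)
The plan is to write $\mathcal{A}$ as an explicit sum of $19$ nonnegative rank-one matrices: three for the green $5\times 5$ piece, and four for each of the four $V$-blocks (red, blue, yellow, magenta). First, take the three rank-one matrices implicit in the proof of Claim~\ref{c3a} with $\alpha=1+\sqrt{0.5}$; padded with zeros they sum to $\mathcal{C}(\alpha,\alpha,\alpha,\alpha,\sqrt{2})$ inside the top-left $5\times 5$ block. Compared to the actual green piece $\mathcal{C}(2,2,2,2,2)$, the discrepancy is at $(1,1)$ (short by $2-\sqrt{2}$) and at $(3,4),(4,5),(5,4),(5,5)$ (short by $2-\alpha$ each).

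Next, for each colored $V$-block with top-row-of-ones in ambient row $r$ (so $r=5,4,3,1$ for red, blue, yellow, magenta respectively), decompose the $5\times 4$ submatrix formed by row $r$ and the four $V$-rows as $\sum_{k=1}^{4}u_kv_k^\top$, where $u_k=a_ke_r+e_{s_k}$ and $v_k$ is the indicator of two cyclically adjacent block-columns. The top-row entries force $a_1=a_3$, $a_2=a_4$, and $a_1+a_2=1$, leaving one free parameter. Choose $a_1^{\mathrm{red}}=a_1^{\mathrm{blue}}=a_1^{\mathrm{yel}}=2-\alpha$ and $a_1^{\mathrm{mag}}=2-\sqrt{2}$, all in $(0,1)$, so every $a_k$ is nonnegative. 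Finally, extend the first column vector in each block by adding $e_4+e_5$ to $v_1^{\mathrm{red}}$, $e_5$ to $v_1^{\mathrm{blue}}$, $e_4$ to $v_1^{\mathrm{yel}}$, and $e_1$ to $v_1^{\mathrm{mag}}$. Since each $u_1$ is supported on the two rows $r$ and $s_1$, the extension creates nonzero entries only at those row pairs in the added columns, and the choices above are engineered so that these extras simultaneously supply the five scattered ones at $(6,4),(6,5),(10,5),(14,4),(18,1)$ and the five green deficits listed above.

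A direct entry-by-entry verification finishes the argument: inside each $V$-block the four rank-one matrices sum to the $V$-plus-top-row structure; the green block becomes $\mathcal{C}(\alpha,\alpha,\alpha,\alpha,\sqrt{2})$ plus the four $2-\alpha$ corrections and one $2-\sqrt{2}$ correction, which equals $\mathcal{C}(2,2,2,2,2)$; and all remaining entries are zero. The only difficulty is bookkeeping, namely confirming that the $v_1$-extensions do not produce any spurious nonzero values outside the support of $\mathcal{A}$; this holds because the rows $r$ and $s_1$ of each $u_1$ are precisely the rows in which the added columns of $\mathcal{A}$ carry the required values.
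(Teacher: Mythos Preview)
Your proposal is correct and follows essentially the same approach as the paper: subtract the rank-$3$ factorization of $\mathcal{C}(\alpha,\alpha,\alpha,\alpha,\sqrt{2})$ from the green block and cover the remainder by four disjoint $\mathcal{B}$-type blocks, each of nonnegative rank~$4$ via Claim~\ref{c2a}. The only difference is presentational: where the paper invokes Claims~\ref{c2a} and~\ref{c3a} abstractly, you spell out the resulting $19$ rank-one terms explicitly (your extended $v_1$ vectors are precisely the second rows of the $\mathcal{B}$-blocks in the paper's decomposition).
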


\begin{proof}
Let $\alpha=1+\sqrt{0.5}$, $M_1=\mathcal{C}(\alpha,\alpha,\alpha,\alpha,\sqrt{2})$, $M_2=\mathcal{B}(2-\alpha,2-\alpha)$, $M_3=\mathcal{B}(2-\alpha)$, $M_4=\mathcal{B}(2-\sqrt{2})$. We subtract $M_1$ from the \textcolor{green}{}{green submatrix} of $\mathcal{A}$, and we denote the resulting matrix by $A$. We note that the non-zero entries of $A$ are covered by the disjoint submatrices $A(5,6,7,8,9|4,5,6,7,8,9)$, $A(4,10,11,12,13|5,10,11,12,13)$, $A(3,14,15,16,17|4,14,15,16,17)$, $A(1,18,19,20,21|1,18,19,20,21)$. These matrices are $M_2$, $M_3$, $M_3$, $M_4$ respectively, so we get $\operatorname{Rank}_+ \mathcal{A}\leqslant \operatorname{Rank}_+ M_1+\operatorname{Rank}_+ M_2+2\operatorname{Rank}_+ M_3+\operatorname{Rank}_+ M_4=19$ from Claims~\ref{c2a},~\ref{c3a}.
\end{proof}


\begin{claim}\label{c6}
$\operatorname{Rank}_+(\mathcal{A},\mathbb{Q})\geqslant20$. 
\end{claim}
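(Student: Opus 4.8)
The plan is to argue by contradiction. Suppose $\mathcal{A}=\sum_{i=1}^{19}R_i$ with each $R_i=u_iv_i^{\top}$ a rank-one nonnegative matrix having rational entries, and aim to show that some rational quantity is forced to equal $1+\sqrt{0.5}$ or $\sqrt2$. First I would localise the rank. The matrix $\mathcal{A}$ contains four disjoint $4\times4$ copies of $V$, on the index sets $\{6,\dots,9\}$, $\{10,\dots,13\}$, $\{14,\dots,17\}$, $\{18,\dots,21\}$ (rows and columns alike), and $\mathcal{A}$ vanishes in every crossing position where a row of one copy meets a column of another; hence no $R_i$ can be nonzero on two of the copies, so the sets $S_g=\{i:R_i\text{ is nonzero on the }g\text{-th copy}\}$ are pairwise disjoint. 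By Claim~\ref{c1} each copy of $V$ needs four terms, so $|S_g|\geqslant4$ and at most three indices are \emph{free}, i.e.\ lie in $T:=\{1,\dots,19\}\setminus(S_1\cup S_2\cup S_3\cup S_4)$; set $L=\sum_{i\in T}R_i$.

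Let $G$ be the top-left $5\times5$ submatrix of $\mathcal{A}$, so $G=\mathcal{C}(2,2,2,2,2)$. The technical heart of the argument is a zero-pattern analysis of $\mathcal{A}$ showing that an $R_i$ with $i\in S_g$ can be nonzero on $G$ only at the entry (or, for the copy on $\{6,\dots,9\}$, the two entries) of $G$ that play the role of a free parameter of $\mathcal{C}$: $d=G_{1,1}$ for the copy on $\{18,\dots,21\}$, $b=G_{3,4}$ for $\{14,\dots,17\}$, $c=G_{4,5}$ for $\{10,\dots,13\}$, and $a_1=G_{5,4}$, $a_2=G_{5,5}$ for $\{6,\dots,9\}$. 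Since $G=\sum_iR_i|_G$, this gives $L|_G=\mathcal{C}(2-c_1,\,2-c_1',\,2-c_3,\,2-c_2,\,2-c_4)$ for rationals $c_1,c_1',c_2,c_3,c_4\geqslant0$ (the total amounts subtracted by the gadgets at those entries). As $\operatorname{Rank}_+L|_G\leqslant|T|\leqslant3$ while $\operatorname{Rank}_+\mathcal{C}\geqslant3$ by Claim~\ref{c3}, we get $|T|=3$ (hence $|S_g|=4$ for every $g$) and $\operatorname{Rank}_+L|_G=3$.

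Next I would invoke Claim~\ref{c4} to confine $L$ to the green block. For $c\in\{6,7,8,9\}$ the restriction of $L$ to rows $\{1,\dots,5\}$ and columns $\{1,\dots,5,c\}$ has last column $(0,0,0,0,L_{5,c})^{\top}$, a matrix $\mathcal{C}_1$ of nonnegative rank $\geqslant4$ unless $L_{5,c}=0$; so $L_{5,c}=0$. Likewise the restriction of $L$ to rows $\{1,\dots,6\}$ and columns $\{1,\dots,5\}$ has sixth row $(0,0,0,L_{6,4},L_{6,5})$, a matrix $\mathcal{C}_2$ unless $L_{6,4}=L_{6,5}=0$; so $L_{6,4}=L_{6,5}=0$. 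Feeding these vanishings back into the zero pattern, one checks that the four terms of $S_1$, restricted to the submatrix of $\mathcal{A}$ on rows $\{5,\dots,9\}$ and columns $\{4,\dots,9\}$ (which equals $\mathcal{B}(2,2)$), sum exactly to $\mathcal{B}(c_1,c_1')$: the contributions of the remaining terms to this submatrix lie only in its first row and, after the vanishings, only in the two $\alpha$-columns there, where they account precisely for $\mathcal{B}(2,2)-\mathcal{B}(c_1,c_1')$. Hence $\operatorname{Rank}_+\mathcal{B}(c_1,c_1')\leqslant4$, and Claim~\ref{c2} forces $c_1=c_1'$. Then $L|_G=\mathcal{C}(2-c_1,2-c_1,2-c_3,2-c_2,2-c_4)$ is a matrix $\mathcal{C}$ with $a_1=a_2$ and nonnegative rank $3$, so Claim~\ref{c3a} forces $2-c_1=2-c_2=2-c_3=1+\sqrt{0.5}$ and $2-c_4=\sqrt2$, contradicting the rationality of $c_1,c_2,c_3,c_4$.

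The step I expect to be the real obstacle is the zero-pattern analysis underlying the last two paragraphs: proving, using only the positions of the zeros of $\mathcal{A}$, that a rank-one term meeting one of the copies of $V$ cannot spill onto $G$ except at the designated parameter entries, that a free term cannot leave the green block except at entries governed by $\mathcal{C}_1$ and $\mathcal{C}_2$, and hence that $S_1$ reconstitutes $\mathcal{B}(c_1,c_1')$ on the nose. This is elementary but needs a careful case analysis over the nonzero entries of each row and column of $\mathcal{A}$; everything after it is the short chain Claim~\ref{c1} $\to$ Claim~\ref{c3} $\to$ Claim~\ref{c4} $\to$ Claim~\ref{c2} $\to$ Claim~\ref{c3a}.
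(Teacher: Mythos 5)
Your proposal is correct and follows essentially the same route as the paper's proof: Claim~\ref{c1} on the four disjoint copies of $V$ to isolate sixteen colored terms and at most three remaining ones, the zero-pattern observation that colored terms meet the green block only at the parameter entries of $\mathcal{C}$, then Claims~\ref{c3}, \ref{c4}, \ref{c2}, \ref{c3a} in exactly that order to force the irrational values $1+\sqrt{0.5}$, $\sqrt{2}$ on a rational matrix. The only differences are cosmetic bookkeeping (you track the subtracted amounts $c_1,c_1',\ldots$ and reconstruct $\mathcal{B}(c_1,c_1')$ from the four red-copy terms, where the paper works with the full sums $U$ and $\textcolor{red}{R}$), and the deferred pattern checks you flag are the same ones the paper states without detailed proof.
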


\begin{proof}
Assuming the converse, we get $\mathcal{A}=A_1+\ldots+A_{19}$ where $A_i$'s are rational rank-one nonnegative matrices. We say that $A_i$ is a \textcolor{red}{red} (or \textcolor{blue}{blue}, \textcolor{yellow}{yellow}, \textcolor{magenta}{magenta}) \textit{summand} if it has at least one non-zero entry with the corresponding color. Since all the entries with these colors are covered by the bottom-right $16\times16$ submatrix of $\mathcal{A}$, and since this submatrix has the form $\operatorname{diag}(\textcolor{red}{V},\textcolor{blue}{V},\textcolor{yellow}{V},\textcolor{magenta}{V})$, we can apply Claim~\ref{c1} and conclude that the colors do not intersect and contain at least four $A_i$'s each. We define $\textcolor{red}{R}$, $\textcolor{blue}{B}$, $\textcolor{yellow}{Y}$, $\textcolor{magenta}{M}$ as the sums of all $A_i$'s that belong to the corresponding colors. We denote by $U$ the sum of uncolored $A_i$'s, and we get $\operatorname{Rank}_+ U\leqslant 3$.

Let us say that \textit{main} entries are those colored \textcolor{cyan}{light blue} or \textcolor{green}{green}. We note that the possible non-zero main entries for \textcolor{red}{red summands} are the \textcolor{cyan}{}{light blue entries} and $\textcolor{green}{}{(5,4)}$, $\textcolor{green}{}{(5,5)}$. For \textcolor{yellow}{yellow summands}, the only possible non-zero main entry is $\textcolor{green}{}{(3,4)}$, for \textcolor{blue}{blue summands} only $\textcolor{green}{}{(4,5)}$ is possible, and for \textcolor{magenta}{magenta summands} only $(1,1)$ is possible. This shows that the \textcolor{green}{}{green submatrix} of $U$ has the form $\mathcal{C}$. Claim~\ref{c3} implies $\operatorname{Rank}_+ U=3$, so we have $\operatorname{Rank}_+ \textcolor{red}{R}\leqslant4$.

If $U$ has a non-zero \textcolor{magenta}{}{light blue entry}, then it contains a submatrix as in Claim~\ref{c4}, which is a contradiction. Otherwise, the red and light blue entries of $\textcolor{red}{R}$ are equal to those of $\mathcal{A}$, which means that the submatrix $\textcolor{red}{R}(5,6,7,8,9|4,5,6,7,8,9)$ has the form $\mathcal{B}$. We see from Claim~\ref{c2} that the $(5,4)$ and $(5,5)$ entries are equal in $\textcolor{red}{R}$; this shows that the $(5,4)$ and $(5,5)$ entries are equal in $U$ as well. We apply Claim~\ref{c3a} and get a contradiction with the rationality of $U$.
\end{proof}

\end{document}